\newtheorem{theorem}{Theorem}[section]
\newtheorem{lemma}[theorem]{Lemma}
\newcommand{\omf}{\omega_f}
\newcommand{\df}{D(f)}
\newcommand{\fxr}{f:X\to \bf R}
\newcommand{\ven}{\varepsilon}
\begin{document}

\title[Characterization of sets of discontinuity points]{A characterization of sets of discontinuity points of separately continuous
functions of many variables on products of metrizable spaces}

\author{V.K.Maslyuchenko}
\address{Department of Mathematics\\
Chernivtsi National University\\ str. Kotsjubyn'skogo 2,
Chernivtsi, 58012 Ukraine}
\email{mathan@chnu.cv.ua}

\author{V.V.Mykhaylyuk}
\address{Department of Mathematics\\
Chernivtsi National University\\ str. Kotsjubyn'skogo 2,
Chernivtsi, 58012 Ukraine}
\email{vmykhaylyuk@ukr.net}

\subjclass[2000]{Primary 54C08; secondary 54C30, 54C05}


\commby{Ronald A. Fintushel}


\keywords{separately continuous functions, meager set, discontinuity points set}

\begin{abstract}
It is shown that a set in product  of $n$ metrizable spaces is the
discontinuity points set of some separately continuous function
if and only if this set can be represented as the union of
a sequence of $F_{\sigma}$-sets which are locally projectively meager.
\end{abstract}

\maketitle
\section{Introduction}

The general inverse problem of the separately continuous mappings theory is as follows: for a given set $E$ which contained in the product
$X$ of topological spaces $X_1,...,X_n$ to construct a separately continuous function $f:X\to \bf R$ with the discontinuity points set $D(f)=E$. Direct theorems on the size of the set $D(f)$ for separately continuous functions give necessary conditions on the set $E$. Solution of the inverse problem for a given necessary conditions gives a characterization of the discontinuity points set of separately continuous functions defined on the product of spaces from some classes.

There are few results which give such characterization. R.Kershner in [1] gives the characterization of set $D(f)$ for separately continuous functions $f: {\bf R}^n \to \bf R$. For $n=2$ this characterization has such view: a set $E\subseteq {\bf R}^2$ is the discontinuity points set for some separately continuous function $f:{\bf R}^2\to \bf R$ if and only if $E$ is a $F_{\sigma}$-set and its projections on the both axis are meagre, that is $E$ is a projectively meagre set. J.~C.~Breckenridge and T.~Nishiura in [2] solved the inverse problem for projectively meager $F_{\sigma}$-sets which contained in the product od two metrizable spaces. According to Calbrix-Troallic theorem from [3], for spaces $X$ and $Y$ with the second countable axiom and for a separately continuous function $f:X\times Y \to \bf R$ the set $D(f)$ is projectively meager. Since for every real-valued function $f$ the set $D(f)$ is a $F_{\sigma}$-set, Breckenridge-Nishiura theorem and Calbrix-Troallic theorem a characterization of discontinuity points set for separately continuous functions defined on the product of two separable metrizable spaces, in particular, on the product of two metrizable compacts.

Some new approaches to the solution of the inverse problem on the product of metrizable spaces were proposed in [4,5,6]. The method from [6] used locally finite systems and Stone's theorem on the paracompactness of a metrizable spaces. This method was developed in [7] (see also [8]). Moreover, it was obtained in [9] using a theorem on the dependence of separately continuous functions on at most countable coordinates a characterization of discontinuity points set for separately continuous functions $f:X\times Y \to \bf R$, where $X$ and $Y$ are the products of metrizable compacts.

A similar to Keshner's characterization is not true for separately continuous functions on the products of metrizable spaces. It was constructed in [10] an example of separately continuous function $f:{\bf R}\times l_{\infty} \to \bf R$ for which the projection of $D(f)$ on the first multiplier coincides with $\mathbb R$ (see also [8,11]). Although in this example the projection of $D(f)$ is not meager,  it was locally projectively one-pointed. This shows that the characterization of $D(f)$ for separately continuous functions $f:X\times Y\to \bf R$ defined on the product of metrizable spaces can be formulated in a locally terms. Such approach was realized in [7]. It was obtained there the characterization of $D(f)$ for separately continuous functions $f:X\times
Y\to \bf R$ where $X$ and $Y$ are metrizable (see [12] and [8]).

In this paper we consider the case of $n$ variables functions $f:X_1\times X_n\to\mathbb R$. We obtain the following result: if $X_1,...,X_n$ are metrizable speces and $X=X_1\times ...\times X_n$, then for set $E\subseteq X$ there exists a separately continuous function $f:X\to \bf R$ with $D(f)=E$ if and only if $E$ is the union of a sequence of $F_{\sigma}$-sets which are locally projectively meager. To prove the necessity we use the quasicontinuity and Banach Theorem on Category [13, p.87-90]. In the proof of the sufficiency we use an approximation method from [6,7,8] which based on the notion of favorable pair.

\section{Definitions and auxiliary statements}
For a function $\fxr$ and set $A\subseteq X$ the real $$\omf(A)=\sup
\{|f(x')-f(x'')|:x',x''\in A\}$$ is called {\it the oscillation of $f$ on $A$}. If $X$ is a topological space, $x_0\in X$ and ${\mathcal U}_{x_0}$ is the system of all neighborhoods of $x_0$ in $X$, then $$\omf(x_0)=\inf \{ \omf (U):U\in {\mathcal U}_{x_0} \}$$ is {\it the oscillation of  $f$ at $x_0$}.
The function $\omf :X\to [ 0,+\infty ]$ is upper semicontinuousw and the sets $\{ x\in X: \omf (x) \geq \ven \}$ are closed for every
$\ven >0$.

A function $\fxr$ is called {\it quasicontinuity}, if for every $x\in X$ and every its neighborhood $U$ and every $\ven >0$ there exists nonempty open set $V$ in $X$ such that $V\subseteq U$ i $|f(v)-f(x)|<\ven $ for every $v\in V$.
The next lemma follows from definition.

\begin{lemma}\label{l:2.1} Let $X$ be a topological spaces. A function $\fxr$ is quasicontinuous if and only if $f(G)\subseteq
\overline {f(A)}$ for every open in $X$ set $G$ and every dense in $G$ set $A\subseteq X$.
\end{lemma}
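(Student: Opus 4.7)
The plan is to prove both directions by unwinding definitions, using the standard observation that a set $A$ which is dense in an open set $G$ meets every nonempty open subset of $G$.

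For the ``only if'' direction, I would fix an open $G\subseteq X$, a dense subset $A$ of $G$, a point $x\in G$, and an $\varepsilon>0$, with the aim of producing a value in $f(A)$ within $\varepsilon$ of $f(x)$. Quasicontinuity of $f$, applied at $x$ with neighborhood $G$ and tolerance $\varepsilon$, yields a nonempty open $V\subseteq G$ such that $|f(v)-f(x)|<\varepsilon$ for every $v\in V$. Density of $A$ in $G$ forces $V\cap A\neq\emptyset$, producing the desired approximation. Letting $\varepsilon\to 0$ shows $f(x)\in\overline{f(A)}$, and since $x\in G$ was arbitrary, $f(G)\subseteq\overline{f(A)}$.

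For the converse I would argue by contradiction. Suppose $f$ fails to be quasicontinuous at some $x\in X$: there exist a neighborhood $U$ of $x$ and an $\varepsilon>0$ such that every nonempty open $V\subseteq U$ contains a point $v$ with $|f(v)-f(x)|\geq\varepsilon$. Replacing $U$ by its interior $G$, which still contains $x$, the set
\[
A=\{v\in G:|f(v)-f(x)|\geq\varepsilon\}
\]
intersects every nonempty open subset of $G$, because any such subset is itself a candidate for $V$. Hence $A$ is dense in $G$. Applying the inclusion hypothesis to this choice of $G$ and $A$ gives $f(x)\in f(G)\subseteq\overline{f(A)}$, which contradicts the fact that every point of $f(A)$ lies at distance at least $\varepsilon$ from $f(x)$.

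The only point that needs a little care is the reduction to an open neighborhood in the converse, which is handled by passing to $\operatorname{int}(U)$; otherwise the argument is a clean topological translation of the definition of quasicontinuity, and I do not expect any substantive obstacle.
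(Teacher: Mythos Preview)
Your argument is correct in both directions and is precisely the direct unwinding of the definition that the paper has in mind; the paper itself gives no detailed proof, merely remarking that the lemma ``follows from definition.'' The only cosmetic point is that in the converse you could take $U$ open from the start (the paper's notion of neighborhood already allows this), making the passage to $\operatorname{int}(U)$ unnecessary, but this does not affect the argument.
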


Let $X=X_1\times ...\times X_n$ be a topological product of $n$ spaces, $x=(x_1,...,x_n)\in X$ and $k\in \{1,...,n\}$. We put
$$\hat{X}_k=X_1\times ...\times X_{k-1}\times X_{k+1}\times ...\times X_n,$$
$\hat{x}_k=(x_1,...,x_{k-1},x_{k+1},...,x_n)$ and $p_{\hat{X}_{k}}(x)=\hat{x}_k$. The mapping $p_{\hat{X}_{k}}:X\to \hat{X}_k$ is called by {\it $k$-th  projection}. We say that a set $A\subseteq X$ is {\it projectively nowhere dense (meager) with respect to $k$-th variable}, if $p_{\hat{X}_{k}}(A)$ is nowhere dense (meager) in $\hat{X}_k$. If for every $x\in X$ there exists a neighborhood $U$ of $x$ in $X$ such that $A\bigcap U$ is projectively nowhere dense (meager) with respect to $k$-th variable, then $A$ is called {\it locally projectivelly nowhere dense (meagre) with respect to $k$-th variable}. Projectively nowhere dense (meagre) with respect to every variable set is called {\it projectively nowhere dense (meagre)}. The same concerns to the local versions of these notions.

Function $f:X_1\times ...\times X_n\to \bf R$ is called {\it separately continuous}, if it continuous with respect to each variable. To solve the inverse problem we need the following proposition which generalize Lemma 3.1.4 from [8].

\begin{lemma}\label{l:2.2} Let $X=X_1\times ...\times X_n$ be a topological spaces of $n$ spaces, $\mathcal W$ be a locally finite system of open in $X$ nonempty set and $(f_ W)_{W\in {\mathcal W}}$ be a family of lower semicontinuous separately continuous functions $f_W:X\to \bf R$ such that $f_W(x)=0$ on $X\setminus W$ for every $W\in {\mathcal W}$. Then the function $f(x)=\sum \limits_{W\in \mathcal{W}}f_W(x)$ is lower semicontinuous and $\df =\bigcup \limits_{W\in \mathcal{W}}D(f_W)$.
\end{lemma}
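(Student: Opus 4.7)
The plan is to use the local finiteness of $\mathcal{W}$ to rewrite $f$ as a finite sum on a neighborhood of any given point, and then derive both the lower semicontinuity and the discontinuity-set identity from a single observation about sums of LSC functions. For any $x_0 \in X$, local finiteness produces a neighborhood $U_0$ of $x_0$ meeting only finitely many elements $W_1,\dots,W_m$ of $\mathcal{W}$ (with $m=0$ permitted); every other $f_W$ vanishes identically on $U_0$, so on $U_0$ the sum is in fact the finite sum $f=f_{W_1}+\cdots+f_{W_m}$. In particular the sum defining $f$ is pointwise finite. Since lower semicontinuity and separate continuity are local properties preserved by finite sums, and each $f_{W_j}$ enjoys both, $f$ inherits them.

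For the identity $\df=\bigcup_{W\in\mathcal{W}}D(f_W)$ I plan to establish the pointwise equivalence: $f$ is continuous at $x_0$ iff every $f_W$ is continuous at $x_0$. The direction ``$\Leftarrow$'' is immediate from the local representation, since the $f_W$ with $W\notin\{W_1,\dots,W_m\}$ vanish on $U_0$ and so are automatically continuous at $x_0$. For ``$\Rightarrow$'' I would invoke the following key fact: \emph{if $g_1,\dots,g_m$ are LSC at $x_0$ and $g_1+\cdots+g_m$ is continuous at $x_0$, then each $g_j$ is continuous at $x_0$.} To prove it, fix $j$, pick a net $x_\alpha\to x_0$ along which $g_j(x_\alpha)\to \limsup_{x\to x_0}g_j(x)$, pass to a subnet so that each $g_k(x_\alpha)$ converges in $[-\infty,+\infty]$, and combine continuity of the full sum (giving $\sum_k \lim g_k(x_\alpha)=\sum_k g_k(x_0)$) with the LSC inequalities $\lim g_k(x_\alpha)\ge g_k(x_0)$ for $k\ne j$ to deduce $\limsup_{x\to x_0}g_j(x)\le g_j(x_0)$; together with LSC of $g_j$, this yields continuity at $x_0$. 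Applying the key fact to $g_j=f_{W_j}$ gives ``$\Rightarrow$'', and the equality of discontinuity sets follows.

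The main obstacle, such as it is, will be the LSC-sum lemma above: this is exactly where the lower semicontinuity hypothesis on each $f_W$ is consumed (without it, discontinuities of different summands could cancel and the inclusion $\bigcup_W D(f_W)\subseteq\df$ would fail). Everything else---pointwise finiteness of the ostensibly infinite sum, local-to-global transfer of LSC and of separate continuity, and handling the degenerate case $m=0$ where $f\equiv 0$ on $U_0$---should follow routinely from local finiteness of $\mathcal{W}$.
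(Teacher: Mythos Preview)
Your proof is correct and follows essentially the same line as the paper's: both use local finiteness to reduce $f$ to a finite sum on a neighborhood of any point (yielding lower semicontinuity, separate continuity, and the easy inclusion $D(f)\subseteq\bigcup_{W}D(f_W)$), and both derive the reverse inclusion from the principle that discontinuities of lower semicontinuous summands cannot cancel. The only cosmetic difference is that the paper invokes the two-term version of this principle (citing Lemma~2.2.1 of [8], namely $D(g+h)=D(g)\cup D(h)$ for LSC $g,h$) applied to the global split $f=f_{W_0}+\sum_{W\ne W_0}f_W$, whereas you prove the finite-summand version directly via a net/subnet argument.
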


\begin{proof} Let $x_0\in X $, $U$ be an open neighborhood of $x_0$ in $X$ such that the system ${\mathcal W}_U=\{W\in {\mathcal W}: W\bigcap U \not =\emptyset \}$ is finite and $g=\sum \limits_{W \in {\mathcal W}_U}f_W$. The function $g$ is lower semicontinuous and separately continuous at $x_0$ as a finite sum of such functions. The restriction of $f$ on $U$ coincides with $g$. Thus $f$ is the same. Moreover, if
$x_0 \not \in \bigcup \limits_{W \in {\mathcal W}}D(f_W)$, then all functions $f_W$ are continuous at $x_0$. Hence $g$ and $f$ are continuous at $x_0$ and $x_0 \not \in \df$. Thus, $\df \subseteq \bigcup \limits_{W \in {\mathcal W}}D(f_W)$.

To prove the inverse inclusion we fix $W_0 \in {\mathcal W}$ and consider the function $$g_{W_0}=\sum \limits_{W \in {\mathcal W}\setminus
\{W_0\}} f_W.$$ The function $g_{W_0}$ is lower semicontinuous. Thus, the function $f_{W_0}$, i $f=g_{W_0} + f_{W_0}$ is lower semicontinuous too. This implies (see, for example, Lemma 2.2.1 from [8]) that $\df=D(g_{W_0})\bigcup D(f_{W_0})$. Hence, $D(f_{W_0}) \subseteq \df$.
\end{proof}

\section{Discontinuity points set of $KC$-functions}

The following proposition plays an important role in the prove of the main result and generalize Theorem 3.2.3 from [8].

\begin{theorem}\label{th:3.1} Let $X$ be a baire spase, $Y$ be a metric space and a function $f:X\times Y \to \bf R$ be a function which is quasicontinuous with respect to the first variable and continuous with respect to the second variable. Then there exists a sequence of $F_{\sigma}$-sets $E_n$ such that $\df= \bigcup \limits_{n=1}^{\infty}E_n$ and for every open in $Y$ set $V$ with ${\rm diam} V< \frac{1}{n}$ the set $E_n \bigcap(X\times V)$ is projectively meagre with respect to the second variable.
\end{theorem}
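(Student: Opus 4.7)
The plan is to decompose $\df$ into countably many $F_{\sigma}$ pieces indexed by pairs $(n,k)$ of positive integers, each jointly controlling the oscillation $\omf$ of $f$ at $(x,y)$ from below and the oscillation of the continuous slice $f(x,\cdot)$ on the ball $B(y,1/n)$ from above, and then to reindex so that the parameter in the conclusion matches the diameter bound. Concretely, I would set
$$
E_{n,k}=\bigl\{(x,y)\in X\times Y:\omf(x,y)\geq\tfrac{1}{k}\text{ and }|f(x,y_1)-f(x,y_2)|\leq\tfrac{1}{3k}\text{ for all }y_1,y_2\in B(y,\tfrac{1}{n})\bigr\}.
$$
The equality $\df=\bigcup_{n,k}E_{n,k}$ is immediate from continuity of $f(x,\cdot)$: for $(x,y)\in\df$ pick $k$ with $\omf(x,y)\geq 1/k$, choose $\delta>0$ so that $\operatorname{diam}f(x,B(y,\delta))<1/(3k)$, and take any $n>1/\delta$. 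The factor $\{\omf\geq 1/k\}$ is closed by upper semicontinuity of $\omf$, and a routine exhaustion argument (using continuity of $f(x,\cdot)$ on each fibre to reduce the supremum on $B(y,1/n)$ to a countable one) shows that $E_{n,k}$ is $F_{\sigma}$.

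The crux is to prove that for each fixed $n,k$ and every open $V\subseteq Y$ with $\operatorname{diam}V<1/n$, the projection $P:=p_X(E_{n,k}\cap(X\times V))$ is nowhere dense in $X$. I would argue by contradiction: assume $P$ is dense in some nonempty open $U\subseteq X$, and iteratively use quasicontinuity of the slices $f(\cdot,y)$ together with the density of $P$ to transfer all estimates to a reference point. Fix $y_0\in V$ and any $x_0\in U$; quasicontinuity of $f(\cdot,y_0)$ at $x_0$ provides a nonempty open $W_0\subseteq U$ with $|f(v,y_0)-f(x_0,y_0)|<\eta$ for $v\in W_0$. Density then gives $x\in W_0\cap P$ with witness $y\in V$, and since $V\subseteq B(y,1/n)$ the definition of $E_{n,k}$ forces $|f(x,y')-f(x,y_0)|\leq 1/(3k)$ for all $y'\in V$. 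The hypothesis $\omf(x,y)\geq 1/k$ then produces $(x_1,y_1),(x_2,y_2)\in U'\times W'\subseteq W_0\times V$ with $|f(x_1,y_1)-f(x_2,y_2)|>1/k-\varepsilon$; applying quasicontinuity of $f(\cdot,y_i)$ at $x_i$ I get nonempty open $U_i\subseteq U'$ on which $f(\cdot,y_i)$ is within $\varepsilon$ of $f(x_i,y_i)$, and density again supplies $x_i''\in U_i\cap P$ at which the pointwise oscillation bound yields $|f(x_i'',y_i)-f(x_i'',y_0)|\leq 1/(3k)$. Chaining through $x_i''\in U_i\subseteq W_0$ I obtain
$$
|f(x_i,y_i)-f(x_0,y_0)|\leq\varepsilon+\tfrac{1}{3k}+\eta\qquad(i=1,2),
$$
so the triangle inequality gives $|f(x_1,y_1)-f(x_2,y_2)|\leq 2\varepsilon+2/(3k)+2\eta$, contradicting the lower bound $>1/k-\varepsilon$ when $\varepsilon=\eta=1/(20k)$.

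Finally I reindex: set $E_N:=\bigcup_{n,k\leq N}E_{n,k}$, a finite union of $F_{\sigma}$ sets and hence $F_{\sigma}$. Then $\df=\bigcup_N E_N$, and for any open $V$ with $\operatorname{diam}V<1/N$ every summand $E_{n,k}$ in $E_N$ has $n\leq N$, so $\operatorname{diam}V<1/n$ and the projection of $E_{n,k}\cap(X\times V)$ is nowhere dense by the middle step; a finite union of nowhere dense sets is nowhere dense and hence meager. The main obstacle is the contradiction argument above: the oscillation-in-$y$ bound is only known pointwise at the witness, not uniformly over any neighbourhood, so the density of $P$ has to be used twice---first to select the initial witness in $W_0$, then to relocate secondary witnesses in each $U_i$ at which the pointwise bound re-applies---and the four small parameters $\eta,\varepsilon,1/(3k),1/k$ must be arranged so that they combine to a strict inequality.
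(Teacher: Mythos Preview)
Your decomposition into the sets $E_{n,k}$ is exactly the one the paper uses (up to the cosmetic choice of $1/(3k)$ versus $1/(4m)$ and $\leq$ versus $<$), and your reindexing $E_N=\bigcup_{n,k\le N}E_{n,k}$ is a harmless variant of the paper's $E_n=\bigcup_m\overline{E_{mn}}$. The contradiction argument you give for the nowhere-denseness of $p_X(E_{n,k}\cap(X\times V))$ is correct and is genuinely different from the paper's: the paper invokes an external result (Breckenridge--Nishiura) that a function quasicontinuous in one variable and continuous in the other is \emph{jointly} quasicontinuous, then applies Lemma~\ref{l:2.1} once to conclude $\omega_f(U\times V)<1/m$ outright; you instead stay within the stated hypotheses and use the density of $P$ a second time, relocating witnesses $x_i''\in U_i\cap P$ at which the vertical oscillation bound re-applies. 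Your route is more self-contained; the paper's is shorter once the joint-quasicontinuity lemma is granted.

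There is, however, one real gap. You assert that ``a routine exhaustion argument (using continuity of $f(x,\cdot)$ on each fibre to reduce the supremum on $B(y,1/n)$ to a countable one) shows that $E_{n,k}$ is $F_\sigma$.'' This is not routine: $Y$ is an arbitrary metric space, so $B(y,1/n)$ need not be separable, and even when it is, the resulting countable intersection involves sets of the form $\{x:|f(x,d_i)-f(x,d_j)|\le 1/(3k)\}\times Y$, which are not obviously closed (the slices $f(\cdot,d_i)$ are merely quasicontinuous, and differences of quasicontinuous functions need not be quasicontinuous, let alone have closed sublevel sets). The paper sidesteps the issue entirely by never claiming $E_{mn}$ is Borel: it sets $F_{mn}=\overline{E_{mn}}$, notes that $E_{mn}\subseteq\{\omega_f\ge 1/m\}$ forces $F_{mn}\subseteq D(f)$, and then observes that since $X\times V$ is open one has $F_{mn}\cap(X\times V)\subseteq\overline{E_{mn}\cap(X\times V)}$, whence $p_X(F_{mn}\cap(X\times V))\subseteq\overline{p_X(E_{mn}\cap(X\times V))}$ is still nowhere dense. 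The same one-line fix rescues your argument: replace each $E_{n,k}$ by its closure before forming $E_N$, and everything else you wrote goes through unchanged.
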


\begin{proof} For $(x,y)\in X\times Y$ we put $f^x(y)=f_y(x)=f(x,y)$. By $V(y_0,\ven)$ we denote the open bull in $Y$ with the center $y_0$ and the radius $\ven$. For $m, n\in\mathbb N$ we consider the sets
$$
E_{mn}=\{(x,y)\in X\times Y: \omf(x,y)\geq \frac{1}{m} \quad \mbox{and} \quad \omega_{f^x}(V(y,\frac{1}{n}))<\frac{1}{4m}\},
$$
$$
F_m=\{ (x,y)\in X\times Y: \omf(x,y) \geq \frac{1}{m}\}, F_{mn}=\overline{E_{mn}} \quad  \mbox{and} \quad  E_n=\bigcup \limits_{m=1}^{\infty}F_{mn}.
$$
The sets $F_m$ are closed. It follows from the continuity of $f^x$ that $\bigcup \limits_{n=1}^{\infty}E_{mn}=F_m$ for every $m$. Moreover, $\df=\bigcup \limits_{m=1}^{\infty}F_m$. This implies that $\bigcup \limits_{n=1}^{\infty}E_n=\df$.

Let $m,n\in \bf N$ and $V$ be an open in $Y$ set with ${\rm diam} V <\frac{1}{n}$. We show that the set $M_0=E_{mn}\bigcap (X\times
V)$ is projectively meagre with respect to the second variable. Assume the contrary. We consider the projection mapping $p_X=p_{\hat Y}$ on the first multiplier and the set $A_0=p_X(M_0)$. According to the assumption, there exists nonempty open in $X$ set $U_0$ such that $\overline{A_0}\supseteq U_0$. Fix any point $y_0\in V$. It follows from the quasicontinuity of $f_{y_0}$ that there exists an open in $X$ nonempty set $U$ such that $U\subseteq U_0$ and $\omega_{f_{y_0}}(U)<\frac{1}{4m}$. Clearly that the set $A=U\bigcap A_0$ is dense in $U$. Since $A\subseteq p_X(M_0)$, for every $a\in A$ there exists $v_a\in V$ such that $(a,v_a)\in E_{mn}$, in particular, $\omega_{f^a}(V(v_a,\frac{1}{n}))<\frac{1}{4m}$. But ${\rm diam}V<\frac{1}{n}$, thus, $V\subseteq V(v_a,\frac{1}{n})$ and $\omega_{f^a}(V)<\frac{1}{4m}$ for every $a\in A$. Let $a_1,a_2\in A$ i $y_1,y_2\in V$. Then
$$
|f(a_1,y_1)-f(a_2,y_2)|=|f(a_1,y_1)-f(a_1,y_0)+f(a_1,y_0)-f(a_2,y_0)+f(a_2,y_0)-
$$
$$
-f(a_2,y_2)|\leq|f^{a_1}(y_1)-f^{a_1}(y_0)|+|f_{y_0}(a_1)-f_{y_0}(a_2)|+|f^{a_2}(y_0)-f^{a_2}(y_1)|<\frac{3}{4m},
$$
thus, $\omf(A\times V)\leq \frac{3}{4m}$. It is wellknown  (see [2, Theorem 2.2]) that $f$ is quasicontinuous. Since the set $A\times V$ is dense in $U\times V$, it follows from Lemma \ref{l:2.1} that $f(U\times V)\subseteq \overline{f(A\times V)}$. Therefore $\omf(U\times V)=diamf(U\times V)\leq diam \overline{f(A\times V)}= diam f(A\times V)= \omf(A\times V)$. Thus $\omf(U\times V)\leq \frac{3}{4m}<\frac{1}{m}$. Now we have
$E_{mn}\bigcap (U\times V)= \emptyset$, hence $A=\emptyset$, a contradiction.

Thus the set $A_0=p_X(M_0)$ is nowhere dense in $X$. Then the closure $\overline {A_0}$ is nowhere dense in $X$ set too. Since $X\times V$ is open in $X\times Y$, we have $$F_{mn} \bigcap (X\times V)=\overline {E_{mn}}\bigcap (X\times
V)\subseteq \overline{E_{mn}\bigcap (X\times V)}=\overline M_0.$$
It follows from the continuity of $p_X$ that $p_X(\overline{M_0})\subseteq \overline{p_X(M_0)}=\overline
{A_0}$. Thus the set $F_{mn}\bigcap (X\times V)$ is projectively meagre with respect to the second variable. Therefore the set
$$E_{n}\bigcap (X\times V)= \bigcup \limits_{m=1}^{\infty}F_{mn}\bigcap (X\times V)$$ is projectively meagre with respect to the second variable. It remains to note that every $E_n$ is a $F_{\sigma}$-set.
\end{proof}

\section{Main result}

\begin{theorem} Let $X_1,...,X_n$ are metrizable spaces, $X=X_1\times ... \times X_n$ and $E\subseteq X$. Then there exists a separately continuous function $\fxr$ with $\df =E$ if and only if the set $E$ is the union of a sequence of a $F_{\sigma}$-sets which are locally projectively meagre.
\end{theorem}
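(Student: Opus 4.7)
The plan is to treat the two implications separately. For necessity, I would apply Theorem~\ref{th:3.1} coordinate by coordinate and combine the resulting decompositions diagonally. For sufficiency, I would exploit the paracompactness of the metrizable $X$, build suitable lower semicontinuous separately continuous functions on each box of a locally finite cover via an $n$-variable extension of the favorable-pair construction of [6,7,8], and glue them via Lemma~\ref{l:2.2}.

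\textbf{Necessity.} Given a separately continuous $\fxr$, for each $k\in\{1,\ldots,n\}$ I would view $f$ as a function on $\hat{X}_k\times X_k$. The factor $\hat{X}_k$ is a finite product of metrizable spaces, which I would assume Baire (or reduce to this case). For each fixed $x_k$, the section $f(\cdot,x_k)$ is separately continuous on $\hat{X}_k$, hence quasicontinuous by the classical theorem invoked in the proof of Theorem~\ref{th:3.1}; for each fixed $\hat x_k$, the section $f(\hat x_k,\cdot)$ is continuous. Theorem~\ref{th:3.1} then yields $F_\sigma$-sets $E_m^{(k)}$ with $\df=\bigcup_m E_m^{(k)}$, such that $E_m^{(k)}\cap(\hat{X}_k\times V)$ projects meagerly onto $\hat{X}_k$ whenever $V\subseteq X_k$ is open of diameter less than $1/m$. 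Taking, at any point of $X$, a basic-box neighborhood whose $k$-th side has small diameter shows that $E_m^{(k)}$ is locally projectively meager with respect to the $k$-th variable. Varying $k$ and rewriting
\[
\df=\bigcup_{(m_1,\ldots,m_n)\in\mathbb{N}^n}\bigcap_{k=1}^{n} E_{m_k}^{(k)}
\]
would exhibit $\df$ as a countable union of $F_\sigma$-sets, each of which lies inside $E_{m_k}^{(k)}$ for every $k$ and is therefore locally projectively meager in every variable.

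\textbf{Sufficiency.} Write $E=\bigcup_n E_n$ with each $E_n$ an $F_\sigma$-set that is locally projectively meager. I would first reduce to producing, for each $n$, a bounded nonnegative lower semicontinuous separately continuous $f_n$ with $D(f_n)=E_n$. The final function would be $f=\sum_n 2^{-n}(1+\sup f_n)^{-1}f_n$: the series converges uniformly, so $f$ is separately continuous; and because every partial sum and every tail is a nonnegative lsc function, iterating $D(g+h)=D(g)\cup D(h)$ for lsc summands (Lemma~2.2.1 of [8], used in the proof of Lemma~\ref{l:2.2}) together with the preservation of pointwise continuity under uniform limits would give $\df=\bigcup_n D(f_n)=E$. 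To build $f_n$, I would use paracompactness of $X$ (Stone) to refine the cover of $X$ by witnesses of local projective meagerness of $E_n$ to a locally finite cover $\mathcal{W}_n$ by basic open boxes $W=W_1\times\cdots\times W_n$ on which $E_n\cap W$ is globally projectively meager in every variable. On each such $W$, the $n$-variable extension of the favorable-pair approximation of [6,7,8] would produce a bounded nonnegative lsc separately continuous $f_W:X\to[0,1]$, vanishing outside $W$, with $D(f_W)=E_n\cap W$. Lemma~\ref{l:2.2} would then assemble $f_n=\sum_{W\in\mathcal{W}_n}f_W$ as an lsc separately continuous function with $D(f_n)=\bigcup_{W\in\mathcal{W}_n}D(f_W)=E_n$.

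\textbf{Main obstacle.} The hard part will be the $n$-variable construction of $f_W$. Even for $n=2$, the favorable-pair technique requires careful orchestration of nested projective coverings whose associated step-function approximations accumulate precisely on the prescribed $F_\sigma$-set. For general $n$ one must organize such coverings simultaneously in all $n$ factors and verify that the accumulated approximation remains continuous along every axis off $E_n\cap W$ while producing exactly the required oscillation at each point of $E_n\cap W$. The remaining ingredients --- paracompactness, Lemma~\ref{l:2.2}, and Theorem~\ref{th:3.1} applied in each of the $n$ coordinate directions --- are essentially scaffolding whose role is to reduce the problem to this single-box construction.
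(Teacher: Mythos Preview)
Your plan matches the paper's architecture, but there is one genuine gap in the necessity direction. You write that you would ``assume $\hat{X}_k$ Baire (or reduce to this case)'' and then invoke the classical fact that a separately continuous function on a product of Baire metrizable spaces is quasicontinuous. This reduction is exactly where the paper does real work: a metrizable space need not be Baire, so neither Theorem~\ref{th:3.1} nor the quasicontinuity theorem from [2] is available on $\hat{X}_k$ as it stands. The paper handles this via the Banach Category Theorem, decomposing $\hat{X}_k$ as the disjoint union of an open Baire subspace $G$, an open meager set $H$, and a closed nowhere dense set $\Gamma$. The portion of $E$ lying over $H\cup\Gamma$ is an $F_\sigma$-set that is already projectively meager in the $k$-th variable, and only over $G$ does one apply Theorem~\ref{th:3.1} (with quasicontinuity of the $\hat x_k$-sections verified locally, using that open subsets of the Baire space $G$ are Baire). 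Without this device or an equivalent one, your necessity argument is incomplete; the diagonal recombination you propose afterwards is fine and is essentially what the paper does.

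On the sufficiency side your outline and the paper's agree, with one organizational difference worth noting. The paper does not feed an $F_\sigma$ projectively meager set directly into the favorable-pair construction; it first reduces $E=\bigcup_m E_m$ to a representation $E=\bigcup_m F_m$ by \emph{closed} locally projectively \emph{nowhere dense} sets (via a separate paracompactness argument), and only then covers and applies the favorable-pair machinery. This matters because that machinery, as set up in [8] and in the paper, produces a function whose discontinuity set is the \emph{closure} $\overline{F_m\cap U}$; starting from closed $F_m$ guarantees $\overline{F_m\cap U}\subseteq F_m$, so one does not overshoot. Your version, which asks the single-box step to realize $E_n\cap W$ exactly for an $F_\sigma$ set, implicitly contains this reduction but would have to carry it out inside the box.
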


\begin{proof} Necessity. Let $\fxr$ be a separately continuous function such that $\df=E$. Fix an index $k=1,...,n$ and prove that $E$ is the union of a sequence of a $F_{\sigma}$-sets $E_{m}^{(k)}$ which are locally projectively meagre with re4spect to the $k$-th variable. According to the Banach Theorem on Category
[13, p.87-90], the space $\hat{X}_k$ is the disjoint union of an open set $G$ which is a Baire subspace of $\hat{X}_k$, an open meagre set $H$ and a closed nowhere dense set $\Gamma$. The space $\hat{X}_k$ is metrizable as a finite product of metrizable spaces. Hence, $\hat{X}_k$ is perfect. Thus, open in $\hat{X}_k$ set $H$ is a $F_{\sigma}$-set in $\hat{X}_k$. Since set $E$ is a $F_{\sigma}$-set in $X$ and the projection mapping $p_{\hat{X}_k}:X\to \hat{X}_k$ is continuous, the set
$$E_0^{(k)}=E\cap p^{-1}_{\hat{X}_k}(H\cup \Gamma)$$
is a $F_{\sigma}$-set in $X$. Moreover, $p_{\hat{X}_k}(E_0^{(k)})\subseteq H\bigcup \Gamma$, thus, the set $E_0^{(k)}$ is projectively meagre with respect to the $k$-th variable.

The subspace $\tilde{X}=p^{-1}_{\hat{X}_k}(G)$ of $X$ is homeomorphic to the product $G\times {X}_k$ and we can to consider the restriction $g=f_{|\tilde{X}}$ as a function of two variables $\hat{x}_k$ and $x_k$. Since $\tilde X$ is an open subspace of $X$, $D(g)=\df \bigcap \tilde X$. Clearly that $g$ is continuous with respect to the second variable because $f$ is separately continuous.

We show that $g$ is quasicontinuous with respect to the first variable. Let $\hat{x}_k\in G$. Since $G$ is open in $\hat{X}_k$, for every $i=1,...,n, i\not = k$ there exist open in $X_i$ sets $U_i$ such that for their product $U$ we have $\hat{x}_k\in U\subseteq G$. Since $G$ is a Baire space, $U$ is a Baire space too. Thus, every multiplier $U_i$ is Baire. Fix $x_k\in X_k$ and consider the restriction $h$ of the function $g_{x_k}:G\to \bf R$ on the set $U$. The function $h:U\to \bf R$ is a separately continuous function of $n-1$ variables which defined on the product of Baire metrizable spaces $U_i$. Since this product is Baire, according to Theorem 2.4 from [2] the function $h$ is quasicontinuous. Therefore $g_{x_k}$ is quasicontinuous at $\hat{x}_k$. Thus, $g$ is quasicontinuous with respect to the first variable.

Now using the Theorem \ref{th:3.1} to the function $g$ we obtain that
$$D(g)=\bigcup \limits _{m=1}^{\infty}E_m^{(k)},$$
where $E_m^{(k)}$ are $F_{\sigma}$-sets which are locally projectively meagre with respect to the second variable in $\tilde X$. Since $\tilde X$ is an open subspace of the metrizable space $X$, the set $E_m^{(k)}$ is an $F_{\sigma}$-set in $X$ and it is an locally projectively meagre set with respect to the $k$-th variable. But $E=E_0^{(k)}\bigcup D(g)$. Thus, $E=\bigcup \limits _{m=0}^{\infty} E_m^{(k)}$ is a needed representation.

Now we put
$$E_m=\bigcap \limits_{k=1}^n\bigcup \limits
_{j=0}^{m} E_j^{(k)}.$$
Since $\bigcup \limits _{j=0}^{\infty}E_j^{(k)}=E$ for every $k=1,...,n$, $\bigcup \limits
_{m=1}^{\infty} E_m=E$. Moreover, the set $\bigcup \limits_{j=0}^{m} E_j^{(k)}$ is locally projectively meagre with respect to the $k$-th variable and is a $F_{\sigma}$-set for every $k=1,...,n$. Therefore, their intersection $E_m$ is a locally projectively meagre $F_{\sigma}$-set.

Sufficiency. Let $E=\bigcup \limits_{m=1}^{\infty} E_m$, where $E_m$ are locally projectively meagre $F_{\sigma}$-sets. We show firstly that $E$ is the union of a sequence of locally projectively meagre closed sets. It is sufficient to show that every set $E_m$ has the same view. It follows from Stone's theorem on paracompactness of a metrizable space [14,ñ.414] that there exists an open locally finite cover ${\mathcal U}$ of $X$ such that for every $U\in {\mathcal U}$ the set $M=E_m\bigcap U$ is projectively meagre. Since $M$ is a $F_{\sigma}$-set, $M=\bigcup \limits _{j=1}^{\infty} B_{0,j}$, where $(B_{0,j})_{j=1}^{\infty}$ is an increasing sequence of closed sets. For every $k=1,...,n$ the projection $M_k=p_{\hat{X}_k}(M)$ is a meagre set in $\hat{X}_k$. Therefore there exists an increasing sequence of closed nowhere dense in $\hat{X}_k$ sets $B_{k,j}$ such that $M_k\subseteq \bigcup \limits_{j=1}^{\infty}B_{k,j}$. Now we put
$$B_j^{(U)}= B_{0,j}\bigcap (\bigcap \limits_{k=1}^{n} p^{-1}_{\hat{X}_k}(B_{k,j})).$$ Clearly that sets $B_j^{(U)}$ are closed and projectively nowhere dense. Moreover, $M=\bigcup \limits_{j=1}^{\infty}B_j^{(U)}$. Really, if $x\in M$, then there exist indexes $j_k$, $k=0,1,...,n$ such that $x\in B_{0,j_0}$ and $p_{\hat{X}_k}(x)\in B_{k,j_k}$ for $k=1,...,n$. Then for $j=max\{j_0,...,j_n\}$ we have $x\in B_{0,j}$ and $p_{\hat{X}_k}(x)\in B_{k,j}$ for $k=1,...,n$. Thus, $x\in B_{j}^{(U)}$. Since the family $(B_{j}^{(U)})_{U\in \mathcal{U}}$ is locally finite, the set $B_j=\bigcup \limits_{U\in \mathcal {U}}B_{j}^{(U)}$ is a closed locally projectively nowhere dense set. Moreover,
$$\bigcup \limits_{j=1}^{\infty}B_{j}=\bigcup \limits_{U\in \mathcal
{U}}(\bigcup \limits_{j=1}^{\infty}B_{j}^{(U)})= \bigcup
\limits_{U\in \mathcal {U}}(E_m\bigcap U)=E_m,$$,
because $\mathcal {U}$ is a cover of $X$.

Thus, $E=\bigcup \limits_{m=1}^{\infty}F_m$, where $F_m$ are closed locally projectively nowhere dense sets. We fix some $m\in \bf N$ and show that there exists a separately continuous lower semicontinuous function $f_m:X\to [0,1]$ such that $D(f_m)=F_m$. Let $\mathcal U$ be a locally finite open cover of $X$ such that for every ${U\in \mathcal {U}}$ the set $F_m\bigcap U$ is projectively nowhere dense. We fix ${U\in \mathcal {U}}$ and put $L=F_m\bigcap U$. The sets $L_k=\overline{p_{\hat{X}_k}(L)}$ are closed and nowhere dense in $\hat{X}_k$. Thus set $A=\bigcup \limits^{n}_{k=1}p_{\hat{X}_k}^{-1}(L_k)$ is closed and nowhere dense in $X$. Denote by $\mathcal T$ the product topology on $X$. According to Theorem 3.2.1 from [8] the pair $(X,A)$ is favorable, that is there exist sequences of families $\tau_i:A\to \mathcal T$ of open in $X$ sets and functions $\pi_i:A\to X$ such that

(1) systems ${\mathcal W}_i=\tau_i(A)=\{\tau_i(a):a\in A \}$ are locally finite;

(2) $\pi_i(a)\in \tau_i(a)$ and $\pi_i(a)=\pi_i(b)$, if
$\tau_i(a)=\tau_i(b)$;

(3) $\tau_i(a)\bigcap A=\emptyset$;

(4) $\lim \limits_{i\to \infty} \pi_i(a)=a$;

(5) for every neighburhood $U_1$ of arbitrary point $x_0$ in $X$ there exists neighborhood $U_2$ of $x_0$ in $X$ such that for every integer $i_0$
the intersection $\tau_j(x)\bigcap U_2=\emptyset$ for all $j>i_0$ and $x\in A\setminus U_1$

for every integer $i$ and arbitrary points $a,b\in A$.

According to $(2)$ for every $W=\tau_i(a)\in {\mathcal W}_i$ the point $\pi_i(a)\in \tau_i(a)$ does not depend on $a$ and depends on $W$ only. This point we denote by $p_i(W)$.

We put $F=\overline {L}$. Since $L\subseteq A$ and $A$ is closed, $F\subseteq A$. We construct a separately continuous lower semicontinuous function $g_m^{(U)}:X\to [0,+\infty)$ for which $D(g_m^{(U)})=F$. Denote ${\mathcal V}_i=\tau_i(F)=\{\tau_i(a):a\in F\}$. For every $V \in
{\mathcal V}_i$ we choose a continuous function $\varphi_{V,i}:X\to [0,1]$ such that $\varphi_{V,i}(p_i(V))=1$ and
$\varphi_{V,i}(x)=0$ on $X\setminus V$. Put
$$
g_{m,i}(x)=\sum \limits_{V\in {\mathcal V}_i}\varphi_{V,i}(x) \qquad \mbox{and} \qquad g_{m}^{(U)}(x)=\sum \limits_{i=1}^{\infty}g_{m,i}(x),
$$
for every $x\in X$. Since the systems ${\mathcal V}_i$ are locally finite, the functions $g_{m,i}$ are continuous. Moreover, $g_{m,i}(x)\geq 0$ for every $x\in X$.

Let $x_0 \in X\setminus F$. We show that the function $g_{m}^{(U)}$ is correctly defined and continuous at $x_0$. Choose a neighborhood $U_1$ of $x_0$ such that $U_1\bigcap F=\emptyset$. According to (5) we choose a neighborhood $U_2$ of $x_0$ and an integer $i_0$ such that $\tau_i(a)\bigcap U_2 = \emptyset$ for all $i>i_0$ and $a\in A\setminus U_1$. But $F\subseteq A\setminus U_1$, therefore $\varphi_{V,i}(x)=0$ for all $x\in U_2$, $i\geq i_0$ and $V\in {\mathcal V}_i$. Thus, $g_{m,i}(x)=0$
on $U_2$ for $i>i_0$. Then $g_{m}^{(U)}(x)=\sum\limits_{i=1}^{i_0}g_{m,i}(x)$ on $U_2$. This implies that $g_{m}^{(U)}$ is correctly defined and continuous at $x_0$.

Let $x_0\in F$. Since $F=\overline{L}$, for every $k=1,...,n$ we have $\hat{x}_{0,k}=p_{\hat{X}_k}(x_0)\in
p_{\hat{X}_k}(\overline{L})\subseteq \overline{p_{\hat{X}_k}(L)}=L_k$. Therefore, $p_{\hat{X}_k}^{-1}(\hat{x}_{0,k})\subseteq A$. It follows from $(3)$ that $g_m^{(U)}(x)=0$ on $A$. Thus, $g_m^{(U)}$ is separately continuous at $x_0$. Moreover, $g_m^{(U)}$ is lower semicontinuous at $x_0$, because $g_m^{(U)}(x_0)=0$ and $g_m^{(U)}(x)\geq 0$ on $X$. For every $i$ we have $V_i=\tau_i(x_0)\in {\mathcal V}_i$ and
$$g_m^{(U)}(z_i)\geq g_{m,i}(z_i)\geq \varphi_{V_i,i}(z_i)=1$$ where $z_i=p_i(V_i)=\pi_i(x_0)$. Note that $\lim \limits_{i\to \infty}z_i=x_0$ according to $(4)$. Therefore $x_0\in D(g_m^{(U)})$.

Since $X$ is perfectly normal, for every $U\in \mathcal U$ there exists a continuous function $h_U:X\to [0,1]$ such that $U=h_U^{-1}((0,1])$. The functions $g_{U,m}=h_U\cdot g_m^{(U)}$ are separately continuous nonnegative lower semicontinuous and $g_{U,m}(x)=0$ on $X\setminus U$. Then by Lemma \ref{l:2.2} the function  $g_{m}=\sum \limits_{U\in \mathcal {U}}g_{U,m}$ is separately continuous nonnegative and lower semicontinuous on $X$. Moreover $D(g_m)=\bigcup \limits_{U\in \mathcal {U}}D(g_{U,m})$. Clearly that $D(g_{U,m})\subseteq D(g_{m}^{(U)})=\overline{F_m\bigcap U}\subseteq F_m$. Therefore $D(g_m)\subseteq F_m$. Let $x_0\in F_m$.
Since $\mathcal U$ covers $X$, there exists $U_0\in \mathcal U$ such that $x_0\in U_0$. Then $h_{U_0}(x)>0$ on $U_0$. Thus,
$D(g_m^{(U_0)})\bigcap U_0\subseteq D(g_{U_0,m})$. Now we have $x_0\in F_m\bigcap U_0\subseteq D(g_{U_0,m})\subseteq D(g_m)$.
Hence $D(g_m)=F_m$. Let $\psi:[0,+\infty)\to [0,1)$ be a homeomorphism. It remains to put $f_m=\psi \circ g_m$.

Now we consider the function
$$f=\sum\limits_{m=1}^{\infty}\frac{1}{2^m} f_m.$$ According to Corollary 2.2.2 from [8], we obtain $\df=\bigcup \limits_{m=1}^{\infty}D(f_m)=E$,
besides $f$ is separately continuous.
\end{proof}

\bibliographystyle{amsplain}

\end{document}